\tikzstyle{block}=[draw opacity=0.7,line width=1cm]
\newcommand{\RN}[1]{%
	\textup{\uppercase\expandafter{\romannumeral#1}}%
}			
\newcommand{\rn}[1]{%
	\textup{\lowercase\expandafter{\romannumeral#1}}%
}			
\setlist[itemize]{}
\newtheorem{theorem}{Theorem}
\newtheorem{proposition}{Proposition}
\title{An Economies-of-Scale Service System Design Problem}
\author{Pooya Hoseinpour}
\affil{\footnotesize Department of Industrial Engineering \& Management Systems, Amirkabir University of Technology (Tehran Polytechnic), Iran\\
	\footnotesize \text{p.hoseinpour@aut.ac.ir} \\ }
\date{}
\begin{document}
	
	\maketitle
	\begin{abstract}
		This paper studies the design of a service system in the presence of economies-of-scale. The goal is to make decision on the number, location, service capacities of facilities as well as on the allocation of customers to the opened facilities in order to minimize the cost of whole system. The total cost includes opening and serving costs of facilities aggregated to the transportation and waiting costs of customers. To reflect the economies-of-scale in the modeling, a general opening cost function is supposed for each service facility with the characteristic of being concave and non-decreasing on its service capacity. A Lagrangian relaxation algorithm is developed for solving the problem in its general form. The algorithm decomposes the relaxed model into some homogeneous subproblems where each one can be optimally solved in polynomial time with no need for any optimization solver. Our computational experiment shows that the developed algorithm is both efficient and effective in solving the proposed problem.	
	\end{abstract}
	{\bf Keywords:} Economies-of-scale; congested facility location; M/M/1 queue; Service system; Lagrangian relaxation; Mixed integer nonlinear program.
	
	\newpage
	\section{Introduction}\label{Int}
	A service system is primarily consisting of two key components, namely, service facilities and customers, where the customers create demands for getting the service offered by service facilities. Both of the demand levels and the service times are stochastic by nature which inevitably results in congestion at service facilities. Congestion in a service facility can be seen as a key factor that has a severe impact on service quality. A service system design (SSD) problem tries to balance the service providing cost and service offering quality. This is one of the interesting problems in location science and also known as stochastic facility location problems with congestion. For a comprehensive review of this problem, the interested reader is referred to the surveys \cite{boffey2007review} and \cite{berman2015stochastic}.
	
	In a service system, the service facilities can be either immobile or mobile; in the first case, the customers are supposed to travel to the locations of service facilities for getting service; however, the service facilities travel to the customers' locations in the second case. This work explores designing an immobile service system, where some potential locations are predetermined for the service facilities. One needs to make decision on both of the numbers and the location of service facilities. Moreover, the service capacity of each opened service facility is often assumed to be optimally decided. There are two approaches for incorporating this decision in the modeling; some works simply assume a finite set of capacity levels for service provision \citep[see e.g.,][]{elhedhli2006service,vidyarthi2014efficient,aboolian2012profit,ahmadi2017convexification,ahmadi2018service, hoseinpour2019designing}, the others model it as a continuous real-valued non-negative decision variable \citep[see e.g.,][]{wang2004facility,castillo2009social,hoseinpour2016profit,elhedhli2018service, ahmadi2018location}. We apply the second approach here in our modeling and will find a closed-form solution for the service capacity of each service facility in a general setting, which would directly be a function of the customers' demands who are allocated to the service facility. 
	
	The allocation of customers to the opened service facilities are assumed to be either user-choice or direct-choice. In a user-choice allocation, each customer selects a service facility for getting service, with the goal of maximizing her utility \citep[see survay][]{berman2015stochastic}; however, in a direct-choice allocation, the customers behave in a way that the allocations be optimal for the whole system, called socially optimal solution. In other words, the goal in the socially-optimal decision making, to which our model belongs, is to minimize the cost of the whole system which is an aggregated function of the cost terms originated from both components, i.e., service facilities and customers. From the facilities' side, typically, facility opening and service costs are considered; whereas, accessing and congestion costs are added from the customers' side. One can see that the first one shows the system's owner cost and the second one reflects the service quality issue. See e.g., \cite{wang2002algorithms,wang2004facility,elhedhli2006service,aboolian2008location,berman2007multiple,kim2013column,vidyarthi2014efficient,hoseinpour2016profit,ahmadi2017convexification,ahmadi2018service,elhedhli2018service, ahmadi2018location, hoseinpour2019designing} for recent works who apply this approach. There is also another approach for considering the service quality, where a threshold is taken for some measure of quality, such as the average number of customers waiting in the queue. See for example the recent works \cite{rajagopalan2001capacity,silva2008locating,baron2008facility,aboolian2012profit}.  
	
	In order to precisely decide the optimal number and service capacity of service facilities, it is highly suggested to consider the advantages of economies-of-scale in the modeling. This has been extensively considered in the classical facility location models \citep[see e.g.,][]{dasci2001plant,dupont2008branch,baumgartner2012supply,saif2016lagrangian}, but is rare in SSD problems. There is strong evidence in the reality for the existence of economies-of-scale in service industries; for example in the banking industry \citep{doukas1991economies}, and healthcare \citep{preyra2006scale}. 
	
	The benefits of economies-of-scale have been previously considered in the design of a service system, where discrete service capacity levels are assumed for each service facility. Technically, considering economies-of-scale would be much easier if some service capacity levels are predefined since only some modifications in the parameters are needed to meet this assumption. Whereas, it makes the structure of the proposed models to be too complicated when the service capacity of each service facility is a continuous variable. Because of this, all previous works who assumed continuous service capacity ignored the benefits of considering economies-of-scale and just modeled the opening cost at each service facility as a linear function of service capacity. The only exception is \cite{elhedhli2018service} who defines $h(\mu):=\sqrt{\mu}$ as an economies-of-scale opening cost of the service facility that works with capacity $\mu$. Our work extends their work and considers a general $h(\mu)$ function, with characteristics of being concave and non-decreasing. The modeling approach is completely novel which enables us to easily handle the complexity of considering economies-of-scale service opening costs. For doing so, the opening cost function is first approximated with its piece-wise linearization and then a novel solution algorithm based on Lagrangian relation is constructed where the relaxed model can be decomposed to smaller subproblems. A polynomial-time exact algorithm is developed for optimally solving the relaxed submodels. Using the proposed algorithm for solving the submodels, the optimal solution of the model can be found directly with no need for any optimization solver, such as CPLEX.   
	
	The remainder of the paper is organized as follows. Section \ref{ProMod} mathematically states the problem which can be solved using a novel Lagrangian relaxation algorithm, which is developed in Section \ref{SolPro}. Section \ref{ComRes} reports the computational results of considering different opening functions. Finally, Section \ref{Con} concludes the main findings with some directions for future research. 
	
	\section{Problem Modeling}\label{ProMod}
	This section mathematically models the problem, i.e., the economies-of-scale SSD problem. The goal is to find the optimal value of decision variables to minimize the whole cost of the service system. Assume $I$ shows the set of potential service facility locations and $J$ be the set of all customers. Let $h_i(\mu_{i})$, the economies-of-scale opening cost function for $i \in I$, is defined as
	\begin{equation}\label{eq:1}
		h_i(\mu_{i}):=f_{i}+c_{i}g_i(\mu_{i}),
	\end{equation}
	where $f_{i}$ is the fixed cost of opening service facility $i \in I$ per time unit, $c_{i}$ is the operating cost in service facility $i \in I$ per time unit, and $g_i(.)$ is an arbitrary concave non-decreasing function. Let $s_{i}$ denotes the service cost per time unit at service facility $i \in I$ and $w_{i}$ shows the waiting cost of each customer in service facility $i \in I$ per time unit. $a_{ij}$ shows the access cost of customer $j\in J$ per time unit if she is assigned to the service facility $i\in I$. Finally, $\lambda_{j}$ denotes the demand rate of customer $j\in J$. The goal is to find the optimal value of the following decision variables:
	
	\begin{itemize}
	 \item $\mu_{i}$ is a non-negative decision variable shows the service rate in service facility $i \in I$,
	 \item $x_{i}\in \{0,1\}$ is a binary decision variable that is one if service facility at location $i \in I$ is opened to provide service,
	 \item $y_{ij}\in \{0,1\}$ is one if customer $j\in J$ is assigned to service facility $i\in I$.
	\end{itemize}
	
	The queue of each opened service facility $i\in I$ is assumed to be an M/M/1 queue system and the arrival rate of customers is shown by $\Lambda_i$, which is equal to 
	\begin{equation}\label{eq:2}
		\Lambda_i=\sum_{j}{\lambda_{j}y_{ij}} \qquad i\in I,
	\end{equation}
	and serving each customer, independently, takes a random time that follows from an exponential distribution with the expected value of $1/\mu_i$, and the discipline of the queue is FCFS, first come first severed. Although we limit this work to M/M/1 queue in each service facility, which is widely used in the literature, the proposed methodology can easily be applied to SSD problems with other queue models in the service facilities. 
	 
	\begin{proposition}\emph{\citep{shortle2018fundamentals}}\label{Pro:1}
	The steady-state condition in the M/M/1 queue system of service facility $i\in I$ is $\rho_{i}<1$, where $\rho_{i}:=\Lambda_{i}/\mu_{i}$ and $\Lambda_{i}$ is defined by Eq.\eqref{eq:2}.
	\end{proposition}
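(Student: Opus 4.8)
The plan is to treat the M/M/1 system at facility $i$ as a continuous-time birth--death Markov chain whose state $n\in\{0,1,2,\dots\}$ records the number of customers present (in service plus waiting). Because arrivals form a Poisson process with rate $\Lambda_i$ and service times are exponential with rate $\mu_i$, the only admissible transitions from state $n\ge 1$ are $n\to n+1$ at rate $\Lambda_i$ and $n\to n-1$ at rate $\mu_i$, while from state $0$ only the upward transition at rate $\Lambda_i$ is possible. First I would write the global balance equations for a putative stationary distribution $\{p_n\}$; for a birth--death chain these collapse to the detailed-balance (cut) equations $\Lambda_i\,p_{n}=\mu_i\,p_{n+1}$ for every $n\ge 0$.

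Next I would solve this recursion. Dividing through gives $p_{n+1}=(\Lambda_i/\mu_i)\,p_n=\rho_i\,p_n$, so by induction $p_n=\rho_i^{\,n}\,p_0$ for all $n\ge 0$. The decisive step is the normalization requirement $\sum_{n=0}^{\infty}p_n=1$, which forces $p_0\sum_{n=0}^{\infty}\rho_i^{\,n}=1$. Here the geometric series $\sum_{n\ge 0}\rho_i^{\,n}$ converges if and only if $\rho_i<1$, in which case it equals $1/(1-\rho_i)$ and yields the unique proper stationary distribution $p_n=(1-\rho_i)\rho_i^{\,n}$ with $p_0=1-\rho_i>0$. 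Conversely, if $\rho_i\ge 1$ the series diverges, no choice of $p_0\ge 0$ can satisfy the normalization, and hence no stationary (steady-state) distribution exists.

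The main obstacle is not the algebra but justifying the equivalence ``steady state exists $\iff$ $\rho_i<1$'' cleanly: one must argue that a normalizable solution of the balance equations is exactly what is meant by a steady state, and that its nonexistence for $\rho_i\ge 1$ reflects the chain being null-recurrent (at $\rho_i=1$) or transient (at $\rho_i>1$), so the queue length drifts to infinity and no equilibrium is attained. Since $\Lambda_i$ is given by Eq.\eqref{eq:2} as the aggregated demand rate assigned to facility $i$, the condition $\rho_i=\Lambda_i/\mu_i<1$ is precisely the statement that the chosen service rate $\mu_i$ strictly exceeds the incoming arrival rate, which I would flag as the natural reading of the stability constraint in the service-system setting.
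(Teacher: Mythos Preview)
Your argument is correct and is exactly the classical birth--death derivation of the M/M/1 stability criterion: detailed balance gives $p_n=\rho_i^{\,n}p_0$, and normalizability of the geometric series is equivalent to $\rho_i<1$. The paper, however, offers no proof of this proposition at all; it is simply stated as a textbook fact with a citation to \cite{shortle2018fundamentals} and then invoked to justify constraint~\eqref{EoS-SSD Model:4}. So your write-up already goes well beyond what the paper provides, and there is nothing further to compare.
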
	
	\begin{proposition}\emph{\citep{shortle2018fundamentals}}\label{Pro:2}
		The average waiting time of each customer in the M/M/1 queue system of service facility $i\in I$ is $W_{i}^{q}=\frac{1}{\mu_i-\Lambda_i}$, where $\Lambda_{i}$ is defined by Eq.\eqref{eq:2}.
	\end{proposition}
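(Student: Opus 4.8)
The plan is to realize the M/M/1 queue at facility $i$ as a continuous-time birth--death Markov chain whose state $n \in \{0,1,2,\dots\}$ counts the number of customers in the system: transitions $n \to n+1$ occur at the state-independent arrival rate $\Lambda_i$ of Eq.~\eqref{eq:2}, and transitions $n \to n-1$ occur at the service rate $\mu_i$ whenever $n \ge 1$. First I would write the steady-state cut-balance equations $\Lambda_i p_{n-1} = \mu_i p_n$ for $n \ge 1$, where $p_n$ is the stationary probability of $n$ customers being present. Iterating gives $p_n = \rho_i^{\,n} p_0$ with $\rho_i = \Lambda_i/\mu_i$, and normalizing through $\sum_{n \ge 0} p_n = 1$ fixes $p_0 = 1-\rho_i$, so that $p_n = (1-\rho_i)\rho_i^{\,n}$. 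This normalization step is precisely where the stability condition $\rho_i < 1$ of Proposition~\ref{Pro:1} enters, since the geometric series $\sum_{n} \rho_i^{\,n}$ converges --- equivalently, the chain is positive recurrent and a stationary distribution exists --- if and only if $\rho_i < 1$.

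Next I would compute the mean number of customers in the system in steady state,
\[
L_i = \sum_{n \ge 0} n\, p_n = (1-\rho_i)\sum_{n \ge 0} n\,\rho_i^{\,n} = \frac{\rho_i}{1-\rho_i},
\]
using the closed form $\sum_{n \ge 0} n \rho^{\,n} = \rho/(1-\rho)^2$. I would then invoke Little's law, $L_i = \Lambda_i W_i$, which holds for any stable system in steady state and equates the time-average number in system with the product of the arrival rate and the mean time $W_i$ that a customer spends in the system. Solving for $W_i$ and substituting $\rho_i = \Lambda_i/\mu_i$ yields
\[
W_i = \frac{L_i}{\Lambda_i} = \frac{\rho_i}{(1-\rho_i)\Lambda_i} = \frac{1}{\mu_i - \Lambda_i},
\]
which is the claimed expression. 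Note that $1/(\mu_i-\Lambda_i)$ is the mean sojourn time (queueing plus service), while the waiting-in-queue component alone would be $\rho_i/(\mu_i-\Lambda_i)$; one must track which convention the downstream cost model uses.

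As an alternative route that avoids Little's law, I could derive the full sojourn-time distribution directly. By the PASTA property an arriving customer finds $n$ in system with the stationary probability $p_n$, and conditional on this its remaining sojourn is the sum of $n+1$ independent exponential service requirements of rate $\mu_i$ (the residual in-service time being again exponential of rate $\mu_i$ by memorylessness). Mixing this Erlang distribution over the geometric law of $n$ collapses to a single exponential distribution of rate $\mu_i - \Lambda_i$, whose mean recovers $1/(\mu_i - \Lambda_i)$ and in addition gives the distributional, not merely the expectation, result.

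The substantive content is concentrated in two places: the existence of the stationary geometric distribution, which requires positive recurrence and is equivalent to $\rho_i<1$ (supplied by Proposition~\ref{Pro:1}); and Little's law, which, although intuitively a conservation identity, needs a sample-path or renewal-reward argument for a fully rigorous proof. In the present setting both ingredients are classical and quotable, so I expect the main obstacle to be notational rather than analytic --- namely fixing unambiguously whether $W_i^q$ is to be read as time in system or time in queue before it is fed into the waiting-cost term.
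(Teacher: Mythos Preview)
Your derivation is correct and complete. The paper itself does not prove this proposition at all; it simply records it as a standard fact with a citation to the queueing textbook of Shortle et al.\ (2018). Your birth--death chain argument leading to the geometric stationary distribution, followed by Little's law, is precisely the textbook route and is sound in every step; the alternative PASTA-based distributional argument you sketch is also standard and correct.

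Your closing remark about the notational ambiguity is apt and worth flagging: despite the superscript $q$, the quantity $W_i^q = 1/(\mu_i-\Lambda_i)$ stated here is the mean \emph{sojourn} time (queue plus service), not the mean time waiting in queue alone, which would be $\rho_i/(\mu_i-\Lambda_i)$. The paper's downstream usage is consistent with the sojourn-time reading, since the waiting-cost term that appears in~\eqref{eq:5} is $w_i\Lambda_i/(\mu_i-\Lambda_i) = w_i\Lambda_i W_i^q$.
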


	The mathematical program for our discussed problem, the economies-of-scale SSD, can be proposed as follows:
	\begin{subequations}\label{EoS-SSD Model}
		\begin{eqnarray}
			\text{(EoS-SSD)} 	&& \nonumber\\			
			\underset{x_i,\mu_i,y_{ij}}{\text{minimize }}
			&&\sum_{i\in I}\left\lbrace 
			h_i(\mu_{i})x_{i} + \sum_{j\in J}s_{i}\lambda_{j}y_{ij} + \sum_{j\in J}a_{ij}\lambda_{j}y_{ij} 
			+ w_{i}\Lambda_{i}W_{i}^{q}\right\rbrace \label{EoS-SSD Model:1}\\						
			\text{subject to}
			&& {y_{ij}} \leq x_{i} \qquad i\in I,\; j\in J\label{EoS-SSD Model:2}\\
			&& \sum_{i\in I} y_{ij}=1 \qquad j\in J \label{EoS-SSD Model:3}\\
			&& \mu_{i}-\Lambda_{i} \geq 0 \qquad i\in I \label{EoS-SSD Model:4}\\
			&& \mu_{i} \geq 0 \qquad i\in I \label{EoS-SSD Model:5}\\
			&& x_{i} \in \{0,1\} \qquad i\in I \label{EoS-SSD Model:6}\\
			&& y_{ij}\in \{0,1\} \qquad i\in I,\; j\in J, \label{EoS-SSD Model:7}						 
		\end{eqnarray}
	\end{subequations}
	where the first term in the objective function \eqref{EoS-SSD Model:1} is the total opening cost of service facilities in the service system and the function $h_{i}(.)$ is defined by Eq.\eqref{eq:1}. The second term calculates the total serving cost of customers, the third term is customers accessing cost, and the last term is customer waiting cost. In the last term of the objective function, the total value of customers waiting cost can be calculated multiplying $\Lambda_{i}$, from Eq.\eqref{eq:2}, by average waiting cost of each customer calculated in Proposition \ref{Pro:2}. One can say that the first two terms are facility-side cost, and the last two terms reflect customer' service quality; by aggregating them the model seeks the socially optimal solution. Constraint set \eqref{EoS-SSD Model:2} indicates that there is no assignment for the non-opened service facility. Constraint set \eqref{EoS-SSD Model:3} ensures that each customer has been assigned to exactly one service facility for placing her demand. Constraint set \eqref{EoS-SSD Model:4} is the steady-state condition obtained in Proposition \ref{Pro:1}. Constraint set \eqref{EoS-SSD Model:5} shows that the service rate of each service facility should be a non-negative real value. Finally, Constraint sets \eqref{EoS-SSD Model:6} and \eqref{EoS-SSD Model:7} show binary location-allocation decision variables.   
	
	The $\text{EoS-SSD}$ model, i.e., Model \eqref{EoS-SSD Model}, is too complex to be solved for an arbitrary opening cost function. In order to deal with this, we first approximate $g_i(\mu_i)$, for $i \in I$, by its piece-wise linear function $\hat{g}_i(\mu_i)$, which can be defined as
	\begin{equation}\label{eq:4}
		\hat{g}_i(\mu_i):= \underset{k\in K}{\text{min}}\left\lbrace \hat{g}_i^{k}(\mu_i)\right\rbrace,
	\end{equation}
	where $\hat{g}_i^{k}(\mu_i)$ is the linear function that is tangent to $g_i(\mu_i)$ in point $\mu^k$ and can be defined as $$\hat{g}_i^{k}(\mu_i):= g_i(\mu^{k})+g_i^{\prime}(\mu^{k})(\mu_i-\mu^k),$$ for $k \in K$. See Figure \ref{Capacity} where function $\hat{g}_i(\mu_i)$ is approximated by $\underset{k\in K}{\text{min}}\left\lbrace \hat{g}_i^{k}(\mu_i)\right\rbrace$.
	
	\begin{figure}[t]
		\begin{center}
			\includegraphics[height=2.8in]{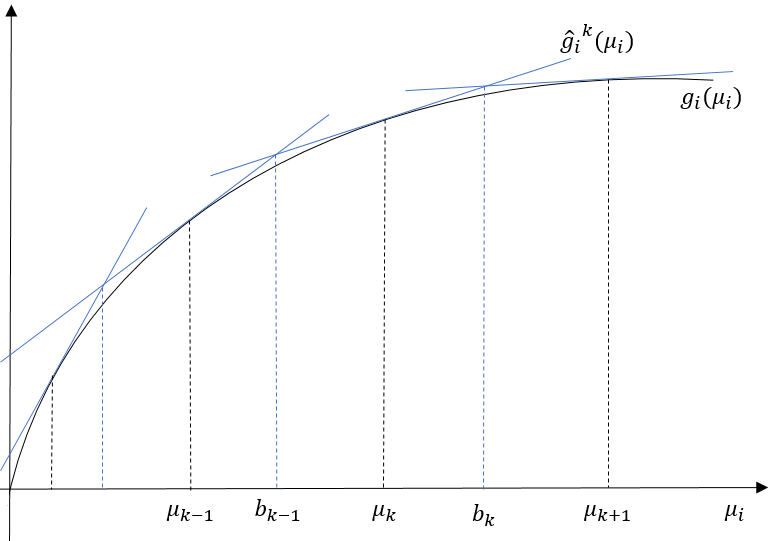}
			\caption{piece-wise linearization of function $g_i(\mu_i)$, $i \in I$, which is a general concave non-decreasing function} \label{Capacity}
		\end{center}
	\end{figure}
	
	{  
		\begin{tabular}{llp{12.5cm}}
			\tabularnewline
			\hline 
			\multicolumn{3}{l}{} \bf{Algorithm 1.} \hfill \tabularnewline
			\hline 
			&\hspace{4mm} \bf{Step 1.} 		& Set $k\leftarrow1$, and $b_{0}=\underline{\upsilon}$. \tabularnewline
			&\hspace{4mm} \bf{Step 2.}  	& Knowing $b_{k-1}$, find $\mu_k$ by solving following equation $$(1+\epsilon)g_i(b_{k-1})=g_i(\mu_k)+g_i^{\prime}(\mu_k)(b_{k-1}-\mu_k),$$ where breakpoint $b_{k-1}$ satisfies in line $\hat{g}_i^{k}(\mu_i)$ and $\hat{g}_i^{k}(b_{k-1})=(1+\epsilon)g_i(b_{k-1})$.\tabularnewline
			&\hspace{4mm} \bf{Step 3.}  	& Knowing $\mu_k$, find $b_k$ by solving following equation $$(1+\epsilon)g_i(b_k)=g_i(\mu_k)+g_i^{\prime}(\mu_k)(b_{k}-\mu_k),$$ where breakpoint $b_{k}$ satisfies in line $\hat{g}_i^{k}(\mu_i)$ and $\hat{g}_i^{k}(b_{k})=(1+\epsilon)g_i(b_k)$.  \tabularnewline
			&\hspace{4mm} \bf{Step 4.}  	& If $b_{k}\leq \bar{\upsilon}$ set $k\leftarrow k+1$ and go Step 2.   \tabularnewline		
			\hline 
		\end{tabular}
	}
	\newline
	
	One can find the finite set of breakpoints $b_k$, and tangent points $\mu_k$, for $k\in K$, such that $0\leq \frac{\hat{g}_i(\mu_i)-g_i(\mu_i)}{g_i(\mu_i)} \leq \epsilon$, for an arbitrary error threshold of $\epsilon >0$. For this purpose, Algorithm 1 is developed inspiring from \cite{elhedhli2005exact}. In Algorithm 1, Steps 2 and 3 iteratively find tangent points and breakpoints, respectively. Moreover, $\underline{\upsilon}$ and $\bar{\upsilon}$ are determined numerically considering the type of function $g_i(.)$.
	
	Substituting \eqref{eq:4} in the objective function \eqref{EoS-SSD Model:1} results 
	\begin{equation}\label{eq:13}
		\underset{x_i,\mu_i,y_{ij}}{\text{minimize }}
		\sum_{i\in I}\underset{k\in K}{\text{min}}\left\lbrace v^{i,k}(x_i,\mu_i,y_{ij})\right\rbrace,
	\end{equation}
	where
	\begin{align}\label{eq:5}
		v^{i,k}(x_i,\mu_i,y_{ij}):=&\left(f_i+c_ig_i(\mu^{k})-c_i\mu^kg_i^{\prime}(\mu^{k})\right)x_{i}+c_ig_i^{\prime}(\mu^{k})\mu_i \nonumber\\
		+&\sum_{j\in J}s_{i}\lambda_{j}y_{ij} + \sum_{j\in J}a_{ij}\lambda_{j}y_{ij} 
		+ w_{i}\frac{\sum_{j\in J}\lambda_{j}y_{ij}}{\mu_i-\sum_{j\in J}\lambda_{j}y_{ij}},
	\end{align}
	for $i\in I$, and $k\in K$. In next section, we will develop a novel Lagrangian relaxation algorithm to solve Model \eqref{eq:13}, \eqref{EoS-SSD Model:2}-\eqref{EoS-SSD Model:7} efficiently.  
	
	\bigbreak
	\section{The Lagrangian Relaxation Algorithm}\label{SolPro}
	In this section, a Lagrangian relaxation algorithm is developed to efficiently solve the proposed Model \eqref{eq:13}, \eqref{EoS-SSD Model:2}-\eqref{EoS-SSD Model:7}. An interested reader is referred to, e.g., \cite{fisher2004lagrangian,geoffrion2010lagrangian} for more details on Lagrangian relaxation method. In abstract, in a Lagrangian relaxation algorithm, first the hard constraints, by relaxing which the model will be much easier to solve, are determined and dualized by Lagrangian multipliers. For $\text{EoS-SSD}$ model, the hard constraints \eqref{EoS-SSD Model:3} are relaxed with Lagrangian multipliers $u_{j}, j\in J$ and aggregated to the objective function, \eqref{EoS-SSD Model:1}, which yields the relaxed Model $\text{R-EoS-SSD}$ as follows:
	\begin{subequations}\label{R-EoS-SSD}
		\begin{eqnarray}
			\left(\text{R-EoS-SSD}\right) && \nonumber\\
			\underset{x_i,\mu_i,y_{ij}}{\text{minimize}} 
			&&\sum_{i\in I}\left\lbrace\underset{k\in K}{\text{min}}\left\lbrace v^{i,k}(x_i,\mu_i,y_{ij})\right\rbrace \right\rbrace
			+ \sum_{j\in J}u_j\left(1-\sum_{i\in I}y_{ij}\right)  \label{R-EoS-SSD:1}\\		
			\text{subject to}&& \eqref{EoS-SSD Model:2}, \eqref{EoS-SSD Model:4}-\eqref{EoS-SSD Model:7} \nonumber,						 
		\end{eqnarray}
	\end{subequations}
	where $v^{i,k}(x_i,\mu_i,y_{ij})$ is defined by \eqref{eq:5}. 
	
	For any values of Lagrangian multipliers $u_j$, for $j \in J$, the optimal value of relaxed problem $\text{R-EoS-SSD}$ provides a lower bound for the optimal value of the original problem $\text{EoS-SSD}$. The goal is to find the tightest Lagrangian lower bound. For doing so, the Lagrangian dual problem is defined as
	\begin{subequations}\label{LDual}
		\begin{eqnarray}
			\left(\text{D-EoS-SSD}\right) && \nonumber\\
			\underset{\bf{u}}{\text{maximum }}       &&L(\bf{u})	\\
			&& \bf{u}\in \mathbb{R}^{|J|},					 
		\end{eqnarray}
	\end{subequations}
	where $L(\bf{u})$ is the optimal objective function value of $\text{R-EoS-SSD}$, i.e., \eqref{R-EoS-SSD:1}. 
	
	This problem is normally solved using the well-known subgradient algorithm, which is an iterative algorithm. Let $\mathbf{u}^t$ be the values of Lagrangian multipliers in iteration $t$. The subgradient algorithm starts by setting initial values for the Lagrangian multipliers, shown as $\mathbf{u}^0$, and updates them iteratively. In each iteration $t$, the relaxed problem is optimally solved and provides a lower bound solution $Lb^t$. As the solution might not be feasible to the original problem, a heuristic algorithm is then used to construct a feasible solution, which will be an upper bound solution for the original problem, shown as $Ub^t$. The Lagrangian multipliers for next iteration will be updated as follow:
	\begin{equation}\label{LMulti}
		\mathbf{u}^{t+1}\leftarrow \mathbf{u}^{t}+\frac{\alpha^t\left(\underset{s=0,\dots,t}{\text{min}}\left\lbrace Ub^s\right\rbrace -Lb^t\right) }{||\bf{v}^t||}\mathbf{v}^t , 
	\end{equation}
	where $\mathbf{v}_j^t=1-\sum_{i\in I}{y_{ij}}$ for $j \in J$. $\alpha^t$ is a control parameter starts with initial $\alpha^t\in (0,2)$ and is reducing when there is no improvements for long time; see \cite{geoffrion2010lagrangian}. The algorithm stops when the lower bound and upper bound solutions are close enough to each other, i.e., 
	\begin{equation}\label{LError}
		\frac{\underset{s=0,\dots,t}{\text{min}}\left\lbrace Ub^s\right\rbrace -Lb^t}{Lb^t}\leq e,
	\end{equation}
	where $e>0$ is a user-defined error bound. 
	
	In the following, Subsection \ref{LowBou} provides a methodology to polynomially solve the R-EoS-SSD in each iteration of Lagrangian relaxation algorithm. Subsection \ref{UppBou} presents a heuristic algorithm that constructs a feasible solution from the solution obtained by solving R-EoS-SSD.  
	
	\bigbreak
	\subsection{Lower bound Solution}\label{LowBou}
	Let set the values of Lagrangian multipliers as $\mathbf{u}^t$ in iteration $t$. The relaxed Model \eqref{R-EoS-SSD} is separable to $|I|$ subproblem, called $\text{R-EoS-SSD}^{(i),t}$, as follows: 
	\begin{subequations}\label{R-EoS-SSD-i}
		\begin{eqnarray}
			\left(\text{R-EoS-SSD}^{(i),t}\right) && \nonumber\\
			\underset{\mu_i,y_{ij}}{\text{minimize}}
			&&\underset{k\in K}{\text{min}}\left\lbrace v^{i,k}(1,\mu_i,y_{ij})-\sum_{j\in J}u^t_{j}y_{ij}\right\rbrace \label{R-EoS-SSD-i:1}\\
			\text{subject to}&& \mu_{i}-\sum_{j\in J}\lambda_{j}y_{ij} \geq 0 \label{R-EoS-SSD-i:3}\\
			&& \mu_{i} \geq 0 \label{R-EoS-SSD-i:4}\\
			&& y_{ij}\in \{0,1\} \qquad j\in J. \label{R-EoS-SSD-i:5}						 
		\end{eqnarray}
	\end{subequations}
	
	After solving $\text{R-EoS-SSD}^{(i),t}$, for $i \in I$, if the objective value is negative, then set $x_{i}=1$ and open the service facility $i \in I$; otherwise set $x_{i}=0$ and close the service facility with objective value of zero. Theorem \ref{Th:1} finds the optimal solution of problem $\text{R-EoS-SSD}^{(i),t}$, for $i \in I$, in polynomial time. See Proposition \ref{Pr:3} for more detail.
	
	\begin{theorem}\label{Th:1}
	The optimal solution of problem $\text{R-EoS-SSD}^{(i),t}$, for $i \in I$ in iteration $t$, is 
	\begin{align*}
		y^*_{ij}&= argmin\left\lbrace z^{i,k(i)} \right\rbrace\\
		\mu^*_{i}&=\sum_{j\in J}{\lambda_{j}y^*_{ij}}+\sqrt{\frac{w_i\sum_{j\in J}{\lambda_{j}y^*_{ij}}}{c_ig_i^{\prime}(\mu^{k(i)})}},
	\end{align*}
	where $k(i)=\underset{k\in K}{argmin}\left\lbrace z^{i,k}\right\rbrace$ and
	\begin{subequations}\label{Inner}
		\begin{eqnarray}
		z^{i,k}-p^{i,k}=\underset{y_{j}}{\text{minimize }}
		&&\sum_{j\in J}\left(q_j^{i,t,k}-u_j^t\right)y_{j} + \sqrt{\sum_{j\in J}r_j^{i,k}y_{j}}\\
		\text{subject to}	&& y_{j}\in \{0,1\} \qquad j\in J,					 
		\end{eqnarray}
	\end{subequations}
	for $i \in I$ and $k \in K$, where
	\begin{align*}
		p^{i,k}&=f_{i}+c_{i}g_i(\mu^k)-c_{i}g_i^{\prime}(\mu^k)\mu^k\\
		r_j^{i,k}&=4w_{i}c_{i}g_i^{\prime}(\mu^k)\lambda_j\\
		q_j^{i,k}&=s_{i}\lambda_{j}+a_{ij}\lambda_j+c_{i}g_i^{\prime}(\mu^k)\lambda_j.
	\end{align*}
	\end{theorem}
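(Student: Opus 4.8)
The plan is to exploit the nested structure of the inner minimization over $k\in K$ in \eqref{R-EoS-SSD-i:1} together with the observation that, once an index $k$ and a binary assignment vector $(y_{ij})_{j\in J}$ are fixed, the only remaining variable $\mu_i$ enters the objective through a one-dimensional convex function that admits a closed-form minimizer. First I would interchange the two minimizations: since \eqref{R-EoS-SSD-i:1} has the form $\min_{\mu_i,y}\min_{k}\{\cdots\}$ and the outer operation is a pointwise minimum over the finite set $K$, it equals $\min_{k\in K}\min_{\mu_i,y}\{v^{i,k}(1,\mu_i,y_{ij})-\sum_{j}u_j^t y_{ij}\}$. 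Thus it suffices to solve, for each fixed $k\in K$, the inner problem of minimizing $v^{i,k}(1,\mu_i,y_{ij})-\sum_j u_j^t y_{ij}$ subject to \eqref{R-EoS-SSD-i:3}--\eqref{R-EoS-SSD-i:5}, denote its optimal value $z^{i,k}$, and finally set $k(i)=\arg\min_{k} z^{i,k}$.

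Next, for fixed $k$ and fixed feasible $(y_{ij})$, write $\Lambda_i=\sum_{j}\lambda_j y_{ij}$ and isolate the terms of $v^{i,k}(1,\mu_i,y_{ij})$ that depend on $\mu_i$, namely $c_i g_i^{\prime}(\mu^k)\mu_i + w_i\Lambda_i/(\mu_i-\Lambda_i)$, which must be minimized over $\{\mu_i>\Lambda_i\}$ (the point $\mu_i=\Lambda_i$ being excluded since the objective diverges there when $\Lambda_i>0$). The substitution $s=\mu_i-\Lambda_i>0$ turns this into $c_i g_i^{\prime}(\mu^k)(\Lambda_i+s)+w_i\Lambda_i/s$, which is strictly convex in $s$ because $w_i\Lambda_i/s$ has positive second derivative. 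Setting the derivative to zero gives $s^{\star}=\sqrt{w_i\Lambda_i/(c_i g_i^{\prime}(\mu^k))}$, i.e. the claimed $\mu_i^{\star}=\Lambda_i+\sqrt{w_i\Lambda_i/(c_i g_i^{\prime}(\mu^k))}$; evaluating the two terms at $s^{\star}$ collapses them to $c_i g_i^{\prime}(\mu^k)\Lambda_i + 2\sqrt{w_i c_i g_i^{\prime}(\mu^k)\Lambda_i}$.

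I would then substitute this minimized value back into $v^{i,k}$ and regroup. The $\mu_i$-independent constant is exactly $p^{i,k}=f_i+c_i g_i(\mu^k)-c_i g_i^{\prime}(\mu^k)\mu^k$; the linear piece $c_i g_i^{\prime}(\mu^k)\Lambda_i$ merges with the serving and access terms to produce the coefficient $q_j^{i,k}=s_i\lambda_j+a_{ij}\lambda_j+c_i g_i^{\prime}(\mu^k)\lambda_j$ on each $y_{ij}$; and, using $2\sqrt{A}=\sqrt{4A}$ together with $\Lambda_i=\sum_j\lambda_j y_{ij}$, the cross term rewrites as $\sqrt{\sum_j 4 w_i c_i g_i^{\prime}(\mu^k)\lambda_j\,y_{ij}}=\sqrt{\sum_j r_j^{i,k} y_{ij}}$ with $r_j^{i,k}=4 w_i c_i g_i^{\prime}(\mu^k)\lambda_j$. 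This reproduces precisely the reduced binary program \eqref{Inner} whose optimal value is $z^{i,k}-p^{i,k}$; minimizing over $k$ to obtain $k(i)$, reading off the optimal $y^{\star}$ and inserting it into the formula for $\mu_i^{\star}$ then completes the argument (the polynomial-time solvability of \eqref{Inner} itself being the content of Proposition \ref{Pr:3}).

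The main obstacle is less the one-variable calculus than the algebraic bookkeeping in the substitution step, where one must correctly split the minimized $\mu_i$-terms into $p^{i,k}$, the augmentation of $q_j^{i,k}$, and the radical $\sqrt{\sum_j r_j^{i,k} y_{ij}}$. Two further points require justification. First, the first-order condition delivers a genuine global minimizer only because the isolated function is strictly convex on $\{\mu_i>\Lambda_i\}$, which in turn requires $c_i g_i^{\prime}(\mu^k)>0$; this holds since $g_i$ is concave and non-decreasing, provided the tangent abscissae $\mu^k$ generated by Algorithm 1 avoid any flat portion of $g_i$. Second, the degenerate case $\Lambda_i=0$ must be handled separately: there the waiting term vanishes and the positive coefficient $c_i g_i^{\prime}(\mu^k)$ drives $\mu_i^{\star}$ to $0$, in agreement with the stated formula. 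Feasibility with respect to \eqref{R-EoS-SSD-i:3} is then automatic, since $\mu_i^{\star}-\Lambda_i=\sqrt{w_i\Lambda_i/(c_i g_i^{\prime}(\mu^k))}\ge 0$.
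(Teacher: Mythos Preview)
Your proposal is correct and follows essentially the same approach as the paper: interchange the outer minimization over $k\in K$ with the one over $(\mu_i,y_{ij})$, solve the resulting one-dimensional convex problem in $\mu_i$ by first-order conditions to obtain $\mu_i^{\star}=\Lambda_i+\sqrt{w_i\Lambda_i/(c_i g_i^{\prime}(\mu^{k}))}$, and substitute back. Your write-up is in fact more detailed than the paper's, which stops after deriving $\mu_i^{\star}$ and simply asserts that substitution into the objective ``completes the proof''; you explicitly carry out the algebra that identifies $p^{i,k}$, $q_j^{i,k}$, $r_j^{i,k}$ and also address the boundary case $\Lambda_i=0$ and the positivity of $g_i^{\prime}(\mu^k)$, neither of which the paper mentions.
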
	
 
	\begin{proof}{Proof of Theorem \ref{Th:1}.}
		By changing the order of minimizations in problem  $\text{R-EoS-SSD}^{(i),t}$, i.e., \eqref{R-EoS-SSD-i}, it's optimal solution can be found by solving $|K|$ similar problems as follows:
		\begin{subequations}\label{R-SSD-i-k}
			\begin{eqnarray}
				\underset{\mu_i,y_{ij}}{\text{minimize}} 
				&&v^{i,k}(x_i,\mu_i,y_{ij})-\sum_{j\in J}u^t_{j}y_{ij} \label{R-SSD-i-k:1}\\
				\text{subject to}	&& \eqref{R-EoS-SSD-i:3}-\eqref{R-EoS-SSD-i:5} \nonumber,						 
			\end{eqnarray}
		\end{subequations}
		and choosing the optimal solution of the one with minimum objective value. In order to solve \eqref{R-SSD-i-k} one can first find the optimal value of $\mu_{i}$ by considering the following problem: 
		\begin{subequations}\label{Cap}
			\begin{eqnarray}
				\underset{\mu_i}{\text{minimize }}
				&&c_ig_i^{\prime}(\mu^k)\mu_i+w_i\frac{\Lambda_i}{\mu_i-\Lambda_i}\\
				\text{subject to}&& \mu_i \geq \Lambda_i \label{Cap:1}.						 
			\end{eqnarray}
		\end{subequations}
		The program \eqref{Cap} is convex respect to $\mu_i$; therefore, by setting its differentiation equal zero, one can find the optimal value of service capacity as follows
		\begin{equation}\label{OptCap}
			\mu_i^{*}=\Lambda_i+\sqrt{\frac{w_i\Lambda_i}{cg_i^{\prime}(\mu^{k})}}.
		\end{equation}
		which clearly satisfies Constraint \eqref{Cap:1}. Substituting the optimal value of service capacity \eqref{OptCap} into objective function \eqref{R-SSD-i-k:1}	which completes the proof.
	\end{proof}
	
	Model \eqref{Inner}, defined in Theorem \ref{Th:1}, is a pure integer program and one can optimally solve it using the proposed Algorithm 2 in the following.
	
	{
		\begin{tabular}{llp{12.5cm}}
			\tabularnewline
			\hline 
			\multicolumn{3}{l}{} \bf{Algorithm 2.} \hfill \tabularnewline
			\hline 
			&\hspace{4mm} \bf{Step 1.} 		& For each $j\in J$, if $\left(q_j^{i,t,k}-u_j^t\right)\geq 0$, put $j$ in set $J_1$ and if $\left(q_j^{i,t,k}-u_j^t\right)<0$ and $r_j^{i,k}=0$, put it in set $J_2$; otherwise put it in set $J_3$.     \tabularnewline
			&\hspace{4mm} \bf{Step 2.}  	& For customer $j\in J_{1}$, set $y_{ij}=0$ and for customer $j\in J_{2}$ set $y_ij=1$. \tabularnewline
			&\hspace{4mm} \bf{Step 3.}  	& Sort customers $j\in J_{3}$ in non-decreasing order of $\left\lbrace \frac{q_j^{i,t,k}-u_j^t}{r_j^{i,k}}\right\rbrace$. \tabularnewline
			&\hspace{4mm} \bf{Step 4.}  	& Define $J_3^{(m)} \subseteq J_3$ such that contain first $m$ customers in $J_3$, and calculate $$Q_m:=\sum_{j\in J_3^{(m)}} \left(q_j^{i,t,k}-u_j^t\right) + \sqrt{\sum_{j\in J_3^{(m)}} r_j^{i,k}} $$ for $m \in J_3$. Find $m^*:=\underset{m \in J_3}{\text{argmin }}{Q_m}$.  \tabularnewline
			&\hspace{4mm} \bf{Step 5.}  	& For $j \in J_3$ that $j\leq m^*$ set $y_{ij}=1$; for $j \in J_3$ that $j>m^*$ set $y_{ij}=0$.	\tabularnewline	
			\hline 
	\end{tabular}}
	\newline 
	
	Algorithm 2 is driven directly from Theorem \ref{Th:2}.	
	\begin{theorem}\emph{\citep{daskin2002inventory}} \label{Th:2}
		Consider problem
		\begin{align*}
			\text{minimize}\quad &h(\mathbf{y})=\sum_{j\in J}d_{j}y_{j}+\sqrt{\sum_{j\in J}b_{j}y_{j}}\\
			\text{subject to\quad}	 & y_{j}\in \{0,1\} \qquad j\in J
		\end{align*}
		with $d_{j}\in \mathbb{R},\; b_{j} \in \mathbb{R}^+$ for $j\in J$. 
		
		For each $j \in J$, if $d_{j}\geq 0$, then $y_{j}=0$; if $d_{j}<0$ and $b_{j}=0$, then $y_{j}=1$; otherwise for ordered $j$ as
		\begin{equation*}
			\left\lbrace \frac{d_{1}}{b_{1}} \right\rbrace \leq \dots \leq  \left\lbrace \frac{d_{|\hat{J}|}}{b_{|\hat{J}|}} \right\rbrace,
		\end{equation*}
		in set $\hat{J}\subseteq J$, if $y_{s}=1$, then $y_{l}=1$ for all $l \leq s$.
	\end{theorem}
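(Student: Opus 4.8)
The plan is to combine the monotonicity of the objective in the ``sign-determined'' coordinates with a purely local exchange argument that establishes the threshold (prefix) structure on the remaining index set $\hat J$. Throughout write $S=\{j:y_j=1\}$, $h(S)=\sum_{j\in S}d_j+\sqrt{\sum_{j\in S}b_j}$, and $B=\sum_{j\in S}b_j$. First I would dispose of the two easy cases. If $d_j\ge 0$, then for any set $S\ni j$ deleting $j$ changes the value by $-d_j+\bigl(\sqrt{B-b_j}-\sqrt{B}\bigr)$, and both summands are $\le 0$ because $b_j\ge 0$ and $\sqrt{\cdot}$ is non-decreasing; hence there is an optimum with $y_j=0$. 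If $d_j<0$ and $b_j=0$, then inserting $j$ changes the value by exactly $d_j<0$ (the root is unaffected), so $y_j=1$ in every optimum. Fixing these coordinates leaves a self-contained problem over $\hat J$ (the forced-in indices carry $b_j=0$ and so do not perturb the root), namely minimizing $h$ over subsets of $\hat J$, where now $d_j<0$ and $b_j>0$.

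The core step is the threshold property on $\hat J$. Let $S$ be optimal and take $s\in S$, $l\notin S$; the roots below are well defined since $s\in S$ forces $B\ge b_s$. Optimality means that neither dropping $s$ nor adding $l$ helps:
\begin{align*}
 -d_s+\sqrt{B-b_s}-\sqrt{B}&\ge 0,\\
 d_l+\sqrt{B+b_l}-\sqrt{B}&\ge 0.
\end{align*}
Rewriting each increment as $\sqrt{B}-\sqrt{B-b_s}=b_s/(\sqrt{B}+\sqrt{B-b_s})$ and $\sqrt{B+b_l}-\sqrt{B}=b_l/(\sqrt{B+b_l}+\sqrt{B})$ gives $-d_s/b_s\ge 1/(\sqrt{B}+\sqrt{B-b_s})$ and $-d_l/b_l\le 1/(\sqrt{B+b_l}+\sqrt{B})$. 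Since $\sqrt{B+b_l}\ge\sqrt{B}\ge\sqrt{B-b_s}$, the two right-hand denominators compare and I obtain $-d_l/b_l\le -d_s/b_s$, that is $d_s/b_s\le d_l/b_l$. As $s\in S$ and $l\notin S$ were arbitrary, $\max_{s\in S}d_s/b_s\le\min_{l\notin S}d_l/b_l$: every selected ratio lies below every unselected one, which is exactly the claimed prefix structure in the sorted order.

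Finally I would settle the boundary ties, where an unselected index may share the threshold ratio $\theta:=\max_{s\in S}d_s/b_s$ with a selected one. Writing $A$ for the indices of ratio $<\theta$ (all selected, by the separation) and $T$ for those of ratio $=\theta$, the value as a function of the weight $\beta=\sum_{j\in S\cap T}b_j$ selected from $T$ is $\mathrm{const}(A)+\theta\beta+\sqrt{\bigl(\sum_{j\in A}b_j\bigr)+\beta}$, which is \emph{strictly} concave in $\beta$ and hence minimized only at the endpoints $\beta=0$ or $\beta=\sum_{j\in T}b_j$; that is, an optimal $S$ contains either none or all of $T$. Both choices are prefixes of the sorted list, so every optimum is a prefix and the implication ``$y_s=1\Rightarrow y_l=1$ for all $l\le s$'' follows. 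I expect the main obstacle to be precisely this coupling through the square root: a single remove-$s$/add-$l$ swap need not improve, so the argument must split into independent drop and add perturbations and then chain the two resulting inequalities through the concavity (diminishing increments) of $\sqrt{\cdot}$, and the tie block likewise cannot be resolved ratio-by-ratio but only through the strict-concavity-in-$\beta$ observation.
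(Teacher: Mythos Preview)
The paper does not actually prove this statement: Theorem~\ref{Th:2} is quoted as a known result from the cited reference and invoked without argument to justify Algorithm~2, so there is no in-paper proof to compare your work against.

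Your argument is correct and self-contained. The sign-determined cases are immediate from monotonicity, and the core separation step is clean: from the two local optimality inequalities (drop~$s$, add~$l$) together with the identity $\sqrt{B+\delta}-\sqrt{B}=\delta/(\sqrt{B+\delta}+\sqrt{B})$ and the comparison $\sqrt{B+b_l}+\sqrt{B}\ge\sqrt{B}+\sqrt{B-b_s}$ you correctly chain to $d_s/b_s\le d_l/b_l$ for every selected~$s$ and unselected~$l$, which is exactly the prefix separation. The tie-block analysis via strict concavity of $\beta\mapsto\theta\beta+\sqrt{C+\beta}$ is a nice finishing touch; since a strictly concave function on an interval attains its minimum only at an endpoint, and $b_j>0$ on $\hat J$ forces $\beta\in\{0,\sum_{j\in T}b_j\}$ to correspond uniquely to $S\cap T\in\{\emptyset,T\}$, you in fact establish that \emph{every} optimum is a prefix, slightly more than what Algorithm~2 requires (existence of a prefix optimum would suffice there). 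Your purely discrete drop/add perturbation route is elementary and avoids any continuous-relaxation or KKT machinery.
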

	
	\begin{proposition}\label{Pr:3}
		The complexity of Algorithm 2 is $\text{O}\left(|I|\times|K|\times|J|\log{|J|}\right)$.
	\end{proposition}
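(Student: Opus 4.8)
The plan is to bound the running time of Algorithm 2 step by step and then to account for how many times it must be invoked to solve the full relaxed model. First I would observe that a single execution of Algorithm 2 solves exactly one instance of Model \eqref{Inner}, i.e., for a fixed $i\in I$ and a fixed $k\in K$. Steps 1 and 2 merely scan the customers once to form the partition $\{J_1,J_2,J_3\}$ and to fix the forced assignments ($y_{ij}=0$ on $J_1$, $y_{ij}=1$ on $J_2$), so each of them costs $\text{O}(|J|)$ since $|J_1|+|J_2|+|J_3|=|J|$. Step 3 sorts the customers of $J_3$ by the ratio $\left(q_j^{i,t,k}-u_j^t\right)/r_j^{i,k}$, which takes $\text{O}(|J_3|\log|J_3|)=\text{O}(|J|\log|J|)$ with any comparison-based sort; I expect this to be the dominant term.

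The one point that needs care is Step 4, where at first glance evaluating $Q_m$ for every $m\in J_3$ looks quadratic. The key observation is that the two sums defining $Q_m$, namely $\sum_{j\in J_3^{(m)}}\left(q_j^{i,t,k}-u_j^t\right)$ and $\sum_{j\in J_3^{(m)}} r_j^{i,k}$, are prefix sums over the already-sorted list, so they can be maintained incrementally: passing from $Q_{m-1}$ to $Q_m$ requires only adding the $m$-th customer's two coefficients and taking a single square root, all in $\text{O}(1)$. Hence the whole of Step 4, including the determination of the minimizer $m^*$, runs in $\text{O}(|J_3|)=\text{O}(|J|)$, and Step 5 is one further linear scan. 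Summing the per-step costs, a single execution of Algorithm 2 costs $\text{O}(|J|\log|J|)$, with the sort in Step 3 dominating.

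Finally I would aggregate over the invocations. By Theorem \ref{Th:1}, obtaining the optimal solution of a single subproblem $\text{R-EoS-SSD}^{(i),t}$ requires solving Model \eqref{Inner} for each $k\in K$ and then selecting the index $k(i)$ attaining the minimum $z^{i,k}$, i.e., $|K|$ runs of Algorithm 2 plus an $\text{O}(|K|)$ comparison; this gives $\text{O}\!\left(|K|\,|J|\log|J|\right)$ per facility. Since the relaxed Model \eqref{R-EoS-SSD} decomposes into $|I|$ independent subproblems, one for each $i\in I$, the total work is $|I|$ times this bound, namely $\text{O}\!\left(|I|\times|K|\times|J|\log|J|\right)$, which is the claimed complexity. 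The only genuine obstacle is the Step-4 accounting described above; once the prefix-sum device is in place the remainder is a routine summation of the per-step costs.
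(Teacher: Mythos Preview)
Your proof is correct and follows the same approach as the paper's: identifying sorting ($\text{O}(|J|\log|J|)$) and minimum-finding ($\text{O}(n)$) as the key primitives, with the $|I|\times|K|$ factor coming from invoking Algorithm~2 once for each $(i,k)$ pair. In fact you are considerably more careful than the paper, which offers only a two-line sketch and does not explicitly address the prefix-sum device needed to keep Step~4 linear.
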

	\begin{proof} {Proof of Proposition \ref{Pr:3}.}
		The complexity of algorithm for finding minimum of $n$ numbers is $\text{O}(n)$ and the complexity of an efficient sorting algorithm of $n$ numbers is $\text{O}(n\log n)$.  
	\end{proof}
	
	Based on Proposition \eqref{Pr:3}, one can conclude that Theorem \eqref{Th:1} finds the optimal solution of problem $\text{R-EoS-SSD}^{(i),t}$ in low-order polynomial time.
	
	\bigbreak
	\subsection{Upper bound Solution}\label{UppBou}
	The solution provided by the relaxed problem, i.e, $\text{R-EoS-SSD}$, might be infeasible to the original problem. For example, some customers may be assigned to more than one service facility and some customers may not be assigned to any opened service facility. To construct a feasible solution from the solution obtained from the relaxed problem, one can use the following simple heuristic algorithm:
	
	{
		\begin{tabular}{llp{12.5cm}}
			\tabularnewline
			\hline 
			\multicolumn{3}{l}{} \bf{Algorithm 3.} \hfill \tabularnewline
			\hline 
			&\hspace{4mm} \bf{Step 1.} 		& Open service facilities same as the values of $x_i$, for $i \in I$, obtained by solving the relaxed model $\text{R-EoS-SSD}$.    \tabularnewline
			&\hspace{4mm} \bf{Step 2.}  	& Do following substeps:\tabularnewline
			&\hspace{4mm} \it{ Step 2.1.}  	& For each customer $j\in J$ who is assigned to at least one service facility, based on the values of $y_{ij}$ obtained from solving the relaxed model $\text{R-EoS-SSD}$, select service facility $i$ such that $y_{ij}=1$ and has minimum value of $\left\lbrace \frac{q_j^{i,k}}{r_j^{i,k}}\right\rbrace $.    \tabularnewline
			&\hspace{4mm} \it{ Step 2.2.}  	& For each customer $j\in J$ who is assigned to no service facility, based on the values of $y_{ij}$ obtained from solving the relaxed model $\text{R-EoS-SSD}$, select service facility $i$ such that $x_i=1$ and has minimum value of $\left\lbrace \frac{q_j^{i,k}}{r_j^{i,k}}\right\rbrace $.   \tabularnewline
			&\hspace{4mm} \bf{Step 3.}  	& Close an open service facility if no customer is assigned to it in Step 2.  \tabularnewline		
			\hline 
	\end{tabular}}
	\newline 
	
	In Step 1, as the same service facilities as the ones opened in lower bound solution with the same determined capacity are selected to be opened. Step 2 assigns the customers to the opened service facilities. For this purpose, first, for the customers who are assigned to at least one service facility based on the lower bound solution, the service facility with less increase in the objective function is determined in Step 2.1. For the customers who have already no assignments, the service facility among all opened service facilities is selected which increases in the objective function less. Finally in Step 3 if there is an opened service facility with no assignments, it will be closed.       
	
	\bigbreak
	\section{Numerical Testing}\label{ComRes}
	The proposed Lagrangian relaxation algorithm in Section \ref{SolPro} has been implemented in C++ to solve the SSD problem with no need for any commercial solver. The tests are done on a machine with an Intel Core i5 1.8GHz CPU and 8GB RAM. The same test problems as in \cite{elhedhli2006service} are picked and modified. The test problems are initially proposed by \cite{holmberg1999exact} for a class of facility location problems and later some of them are later modified and used by \cite{elhedhli2006service} for designing an SSD problem. In our problem, the service capacities of facilities are decision variables. Thus, the test problems whose parameters are similar to others, with the exception of their facility capacities, are excluded. 27 test problems are finally selected for the current experiment where the number of service facilities and customers varies from 10 to 30 and 50 to 150, respectively. The parameters of the test problems are modified as follows. The serving cost of a customer in service facilities, $s_i$, $i \in I$, are randomly generated from the interval [1 , 5]. The waiting cost of each customer per day, $w_i$, $i \in I$, is randomly generated from the interval [50 , 300]. The fixed cost of opening service facility, $f_i$, $i \in I$, the demand rates of customers, i.e., $\lambda_{j}$, $j \in J$, and the accessing cost of customers to their service facilities, i.e., $a_{ij}$, $i \in I, j \in J$, are exactly collected from the original test problems.
	
	The proposed solution method enables us to consider any arbitrary function with the general setting of being concave and non-decreasing for defining the opening cost of service facilities, i.e., $h_{i}(\mu_{i})$, $i \in I$. Here, three different functions have been defined for opening cost of service facilities; namely, linear, square-root, and fractional functions. The first one, the linear function is defined as
	\begin{equation*}
		h_{i}(\mu_i)=f_i+c_i\mu_i \qquad i \in I,
	\end{equation*}	
	which was traditionally used for mathematically showing opening cost of service facilities; see for examples \cite{castillo2009social, hoseinpour2016profit, ahmadi2018location}. The second one, the square-root function, is defined as
	\begin{equation*}
		h_{i}(\mu_i)=f_i+c_i\sqrt{\mu_i} \qquad i \in I.
	\end{equation*}
	
	This function has been recently considered by \cite{elhedhli2018service}. However, their proposed approach for modeling can only handle square function. The third one is defined here as a concave fractional function as follows:
	\begin{equation*}
		h_{i}(\mu_i)=f_i+\frac{c_i\mu_i}{\mu_i + 1} \qquad i \in I.
	\end{equation*}	
	 	
 	Here, we fix the operating cost of each service facility, i.e., $c_i$ as 1, 10, and 100 for linear, square-root, and fractional opening costs, respectively. 

	Using Algorithm 1 when $g(\mu_i)=\sqrt{\mu_i}$, one can use $$\mu_k\leftarrow\left((1+\epsilon)\sqrt{b_{k-1}}+\sqrt{\left(\epsilon^2+2\epsilon\right)b_{k-1}}\right)^2$$ to update $\mu_k$ in Step 2 and use $$b_k\leftarrow\left((1+\epsilon)\sqrt{\mu_k}+\sqrt{\left(\epsilon^2+2\epsilon\right)\mu_k}\right)^2$$ to find $b_k$ in Step 3. When $g(\mu_i)=\frac{\mu_i}{\mu_i+1}$, one can use $$\mu_k\leftarrow\frac{(1+b)\sqrt{\epsilon b_{k-1}}+(1+\epsilon)b_{k-1}}{1-\epsilon b_{k-1}} $$ to update $\mu_k$ in Step 2 and use $$b_k\leftarrow\frac{1}{2}\left(2\mu_k+\epsilon(1+\mu_k)^2+(1+\mu_k)\sqrt{\epsilon^2(1+\mu_k)^2+4\epsilon\mu_k}\right)$$  to find $b_k$ in Step 3. 
	
	All instances are solved using the developed Lagrangian relaxation algorithm with user-defined tolerance of $e=0.01$ and maximum 10000 iterations of the running algorithm. The algorithm will stop when at least one of these criteria be satisfied. Tables \ref{Linear}-\ref{Fractional} respectively presents the computational results of solving the generated instances considering the three above-mentioned functions for facility opening cost, namely, linear, square-root, and fractional functions. To have a fair comparison, we fixed $\alpha^0$, the initial value of the control parameter in Eq. \ref{LMulti} equal 0.01 and it is halved if there is no significant improvement in lower bounds for 10 consecutive iterations.
	
	The columns of each table respectively show the instance name, picked from \cite{holmberg1999exact}, number of potential service facilities, number of customers, total cost of designing the service system, percentage of each cost component from the total cost, i.e., the opening, serving, accessing, and waiting costs, number of the iterations the algorithm runs for reaching the user-defined tolerance or the predefined limit for maximum number of iterations, the tolerance when the algorithm stops running, the CPU time of running the algorithm, number of opened service facilities, and the average service capacity in the opened facilities. As shown, the proposed algorithm is very successful in solving the instances, where most instances reach the defined tolerance in less than 24 seconds. It is also seen that more iterations are required for the algorithm to reach the user-defined tolerance in the case that the economies-of-scale opening functions are assumed. Moreover, the CPU time is almost higher for square-root and fractional functions. This happens since some breakdown points have been calculated for piece-wise linearization of the opening function and the algorithm spends much longer time for finding the lower bound solution in each iteration of the solution algorithm.
	
	By switching from linear opening cost to the ones benefits from the economies-of-scale, i.e., square-root or fractional opening cost, the total cost slightly decreases, see Figure \ref{EoSTC}. Although the total cost might not change significantly in the presence of economies of scale, the system opens fewer service facilities with higher service capacities on average. This causes the waiting costs to decrease in the whole system, which has a higher intensity in fractional function rather than the square-root one; see Figure \ref{EoSWC} for better comparison per instance.

\begin{figure}[t]
	\begin{center}
		\includegraphics[height=2.8in]{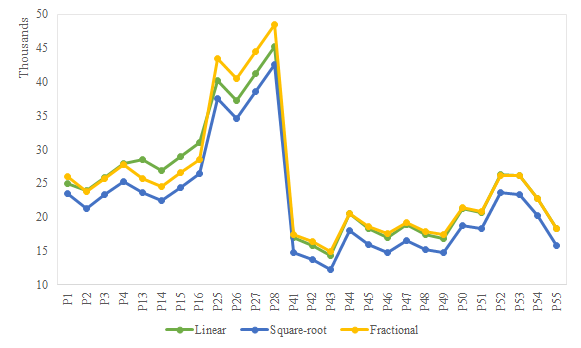}
		\caption{Comparison on total cost considering different opening cost function.} \label{EoSTC}
	\end{center}
\end{figure}	

\begin{figure}[t]
	\begin{center}
		\includegraphics[height=2.8in]{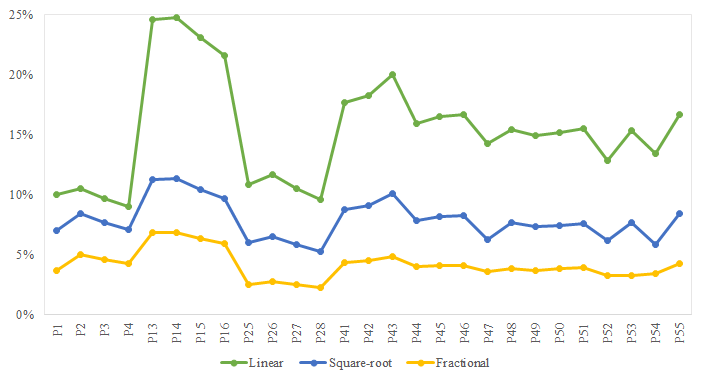}
		\caption{Comparison on waiting cost percentage considering different opening cost function.} \label{EoSWC}
	\end{center}
\end{figure}

	\bigbreak
	\section{Conclusions}\label{Con}
	In this paper, we study an economies-of-scale design of a service system where the opening cost of service facilities are assumed to be any arbitrary function, with characteristics of being concave and non-decreasing. The goal is to optimally decide the number, location, and service capacity of opened facilities as well as the allocation of customers to the facilities. The problem is modeled as a mixed-integer nonlinear program that is too complex to be solved for any kind of opening function. Therefore, it is first approximated by its piece-wise linear functions and then a Lagrangian decomposition algorithm is developed, in which the relaxed model is decomposed into smaller subproblems. A polynomial-time exact algorithm is developed for solving each submodel where the optimal service capacity of each facility is drawn as a closed-form expression. The proposed solution algorithm is successful in solving the instances of the problem; however, considering economies-of-scale increases the solving time in comparison to the traditional model in which a simple linear opening function is considered. Our computational results show that in the presence of economies-of-scale, fewer service facilities are opened with higher service capacity. This decreases the average waiting time of customers and subsequently increases the service quality.
	
	One can consider other kinds of queuing systems in service facilities, e.g., queue with general service time such as M/G/1, as a direction for future research. Moreover, one can consider a profit-maximizing service system where the facilities can use the benefits of economies-of-scale to serve the customers, serving who are not traditionally profitable for the system.
	
\afterpage{
	\begin{landscape}
		\begin{table}
			\centering
			\caption{Computational results when facility opening cost is a linear function of service capacity}
			\resizebox{\linewidth}{!}{
				\begin{tabular}{lllllllllllll}
					\hline 
					Instance & \# Service Facilities & \# Customers & Total Cost & Opening Cost & Serving Cost & Accessing Cost & Waiting Cost & \# Iterations & CPU time(ms) & Error Tolerance & \# Open Facilities & Average Service Capacity\tabularnewline
					\hline 
					P1 & 10 & 50 & 25027.4 & 23\% & 19\% & 48\% & 10\% & 9 & 0 & 0.005 & 1 & 318\tabularnewline
					P2 & 10 & 50 & 23877.4 & 19\% & 20\% & 50\% & 10\% & 11 & 0 & 0.002 & 1 & 318\tabularnewline
					P3 & 10 & 50 & 25877.4 & 25\% & 19\% & 46\% & 10\% & 8 & 0 & 0.007 & 1 & 318\tabularnewline
					P4 & 10 & 50 & 27877.4 & 31\% & 17\% & 43\% & 9\% & 4 & 0 & 0.009 & 1 & 318\tabularnewline
					P13 & 20 & 50 & 28450.6 & 25\% & 17\% & 34\% & 25\% & 11 & 0 & 0.003 & 1 & 790\tabularnewline
					P14 & 20 & 50 & 26958.0 & 25\% & 18\% & 32\% & 25\% & 8 & 0 & 0.010 & 1 & 755\tabularnewline
					P15 & 20 & 50 & 28958.0 & 30\% & 16\% & 30\% & 23\% & 7 & 0 & 0.011 & 1 & 755\tabularnewline
					P16 & 20 & 50 & 30958.0 & 35\% & 15\% & 28\% & 22\% & 5 & 0 & 0.010 & 1 & 755\tabularnewline
					P25 & 30 & 150 & 40259.5 & 30\% & 32\% & 27\% & 11\% & 89 & 125 & 0.010 & 2 & 309\tabularnewline
					P26 & 30 & 150 & 37264.3 & 24\% & 34\% & 30\% & 12\% & 85 & 110 & 0.010 & 2 & 308\tabularnewline
					P27 & 30 & 150 & 41264.3 & 32\% & 31\% & 27\% & 11\% & 80 & 109 & 0.009 & 2 & 308\tabularnewline
					P28 & 30 & 150 & 45264.3 & 38\% & 28\% & 24\% & 10\% & 77 & 172 & 0.008 & 2 & 308\tabularnewline
					P41 & 10 & 90 & 17009.8 & 33\% & 13\% & 37\% & 18\% & 39 & 16 & 0.010 & 1 & 500\tabularnewline
					P42 & 20 & 80 & 15807.8 & 34\% & 12\% & 35\% & 18\% & 5 & 0 & 0.001 & 1 & 473\tabularnewline
					P43 & 30 & 70 & 14311.3 & 37\% & 12\% & 30\% & 20\% & 7 & 0 & 0.009 & 1 & 452\tabularnewline
					P44 & 10 & 90 & 20486.1 & 28\% & 13\% & 43\% & 16\% & 35 & 0 & 0.010 & 1 & 550\tabularnewline
					P45 & 20 & 80 & 18250.5 & 31\% & 13\% & 40\% & 17\% & 27 & 15 & 0.010 & 1 & 520\tabularnewline
					P46 & 30 & 70 & 16990.9 & 32\% & 12\% & 39\% & 17\% & 17 & 16 & 0.010 & 1 & 489\tabularnewline
					P47 & 10 & 90 & 18871.2 & 31\% & 14\% & 41\% & 14\% & 55 & 16 & 0.010 & 1 & 549\tabularnewline
					P48 & 20 & 80 & 17403.5 & 33\% & 14\% & 38\% & 15\% & 53 & 16 & 0.010 & 1 & 525\tabularnewline
					P49 & 30 & 70 & 16791.7 & 33\% & 12\% & 40\% & 15\% & 60 & 32 & 0.010 & 1 & 490\tabularnewline
					P50 & 10 & 100 & 21212.7 & 27\% & 11\% & 46\% & 15\% & 8 & 0 & 0.007 & 1 & 530\tabularnewline
					P51 & 10 & 100 & 20695.9 & 28\% & 12\% & 45\% & 16\% & 64 & 32 & 0.010 & 1 & 530\tabularnewline
					P52 & 10 & 100 & 26298.9 & 23\% & 11\% & 54\% & 13\% & 16 & 0 & 0.008 & 1 & 578\tabularnewline
					P53 & 20 & 100 & 26177.2 & 37\% & 14\% & 34\% & 15\% & 295 & 156 & 0.010 & 3 & 239\tabularnewline
					P54 & 10 & 100 & 22792.3 & 26\% & 13\% & 47\% & 13\% & 59 & 16 & 0.010 & 1 & 592\tabularnewline
					P55 & 20 & 100 & 18356.9 & 33\% & 16\% & 34\% & 17\% & 39 & 15 & 0.010 & 1 & 592\tabularnewline
					\hline 
				\end{tabular}
			}
			\label{Linear}
		\end{table}
	\end{landscape}
}	

\afterpage{
	\begin{landscape}
		\begin{table}
			\centering
			\caption{Computational results when facility opening cost is a square-root function of service capacity}
			\resizebox{\linewidth}{!}{
				\begin{tabular}{lllllllllllll}
					\hline 
					Instance & \# Service Facilities & \# Customers & Total Cost & Opening Cost & Serving Cost & Accessing Cost & Waiting Cost & \# Iterations & CPU time(ms) & Error Tolerance & \# Open Facilities & Average Service Capacity\tabularnewline
					\hline 
					P1 & 10 & 50 & 23421.1 & 19\% & 21\% & 53\% & 7\% & 13 & 312 & 0.010 & 1 & 1631\tabularnewline
					P2 & 10 & 50 & 21305.2 & 21\% & 11\% & 59\% & 8\% & 16 & 265 & 0.010 & 1 & 2282\tabularnewline
					P3 & 10 & 50 & 23305.2 & 28\% & 10\% & 54\% & 8\% & 8 & 250 & 0.002 & 1 & 2282\tabularnewline
					P4 & 10 & 50 & 25305.2 & 34\% & 10\% & 50\% & 7\% & 5 & 250 & 0.002 & 1 & 2282\tabularnewline
					P13 & 20 & 50 & 23642.3 & 21\% & 20\% & 48\% & 11\% & 17 & 1110 & 0.010 & 1 & 3053\tabularnewline
					P14 & 20 & 50 & 22399.7 & 21\% & 21\% & 46\% & 11\% & 12 & 1062 & 0.009 & 1 & 2915\tabularnewline
					P15 & 20 & 50 & 24399.7 & 28\% & 20\% & 43\% & 10\% & 8 & 1047 & 0.001 & 1 & 2915\tabularnewline
					P16 & 20 & 50 & 26399.7 & 33\% & 18\% & 39\% & 10\% & 6 & 1062 & 0.002 & 1 & 2915\tabularnewline
					P25 & 30 & 150 & 37604.0 & 29\% & 34\% & 31\% & 6\% & 107 & 23125 & 0.010 & 3 & 735\tabularnewline
					P26 & 30 & 150 & 34624.0 & 23\% & 37\% & 34\% & 6\% & 117 & 23500 & 0.010 & 2 & 886\tabularnewline
					P27 & 30 & 150 & 38624.0 & 31\% & 33\% & 30\% & 6\% & 94 & 22812 & 0.010 & 2 & 886\tabularnewline
					P28 & 30 & 150 & 42624.0 & 37\% & 30\% & 27\% & 5\% & 84 & 21579 & 0.010 & 2 & 886\tabularnewline
					P41 & 10 & 90 & 14756.0 & 30\% & 15\% & 47\% & 9\% & 7 & 813 & 0.001 & 1 & 1720\tabularnewline
					P42 & 20 & 80 & 13692.6 & 32\% & 14\% & 45\% & 9\% & 69 & 2922 & 0.010 & 1 & 1627\tabularnewline
					P43 & 30 & 70 & 12275.7 & 35\% & 15\% & 40\% & 10\% & 82 & 4484 & 0.010 & 1 & 1558\tabularnewline
					P44 & 10 & 90 & 17958.6 & 25\% & 15\% & 53\% & 8\% & 8 & 813 & 0.002 & 1 & 1890\tabularnewline
					P45 & 20 & 80 & 15920.2 & 28\% & 15\% & 49\% & 8\% & 54 & 2797 & 0.010 & 1 & 1786\tabularnewline
					P46 & 30 & 70 & 14834.5 & 29\% & 14\% & 48\% & 8\% & 75 & 4453 & 0.010 & 1 & 1684\tabularnewline
					P47 & 10 & 90 & 16588.4 & 27\% & 16\% & 51\% & 6\% & 29 & 890 & 0.010 & 1 & 2130\tabularnewline
					P48 & 20 & 80 & 15173.1 & 29\% & 16\% & 48\% & 8\% & 7 & 2547 & 0.006 & 1 & 1804\tabularnewline
					P49 & 30 & 70 & 14749.8 & 30\% & 14\% & 49\% & 7\% & 81 & 4469 & 0.010 & 1 & 1686\tabularnewline
					P50 & 10 & 100 & 18774.0 & 24\% & 13\% & 56\% & 7\% & 67 & 1250 & 0.010 & 1 & 1823\tabularnewline
					P51 & 10 & 100 & 18257.2 & 24\% & 13\% & 55\% & 8\% & 85 & 4609 & 0.010 & 1 & 1823\tabularnewline
					P52 & 10 & 100 & 23630.2 & 19\% & 12\% & 63\% & 6\% & 73 & 1266 & 0.010 & 1 & 2241\tabularnewline
					P53 & 20 & 100 & 23365.8 & 35\% & 16\% & 41\% & 8\% & 321 & 7000 & 0.010 & 2 & 1137\tabularnewline
					P54 & 10 & 100 & 20205.9 & 23\% & 15\% & 57\% & 6\% & 19 & 1094 & 0.010 & 1 & 2296\tabularnewline
					P55 & 20 & 100 & 15748.3 & 29\% & 19\% & 44\% & 8\% & 53 & 4266 & 0.010 & 1 & 2296\tabularnewline
					\hline 
				\end{tabular}
			}
			\label{Square}
		\end{table}
	\end{landscape}
}

\afterpage{
	\begin{landscape}
		\begin{table}
			\centering
			\caption{Computational results when facility opening cost is a fractional function of service capacity}
			\resizebox{\linewidth}{!}{
				\begin{tabular}{lllllllllllll}
					\hline 
					Instance & \# Service Facilities & \# Customers & Total Cost & Opening Cost & Serving Cost & Accessing Cost & Waiting Cost & \# Iterations & CPU time(ms) & Error Tolerance & \# Open Facilities & Average Service Capacity\tabularnewline
					\hline 
					P1 & 10 & 50 & 26011.7 & 28\% & 19\% & 49\% & 4\% & 10000 & 5032 & 0.027 & 1 & 5746\tabularnewline
					P2 & 10 & 50 & 23711.6 & 31\% & 10\% & 54\% & 5\% & 275 & 172 & 0.010 & 1 & 5746\tabularnewline
					P3 & 10 & 50 & 25711.6 & 36\% & 9\% & 50\% & 5\% & 172 & 230 & 0.010 & 1 & 5746\tabularnewline
					P4 & 10 & 50 & 27711.6 & 41\% & 9\% & 46\% & 4\% & 210 & 156 & 0.010 & 1 & 5746\tabularnewline
					P13 & 20 & 50 & 25723.7 & 29\% & 18\% & 45\% & 7\% & 10000 & 9297 & 0.013 & 1 & 7676\tabularnewline
					P14 & 20 & 50 & 24532.3 & 30\% & 19\% & 44\% & 7\% & 10000 & 9390 & 0.031 & 2 & 4761\tabularnewline
					P15 & 20 & 50 & 26532.3 & 35\% & 18\% & 40\% & 6\% & 10000 & 9641 & 0.029 & 2 & 4761\tabularnewline
					P16 & 20 & 50 & 28532.3 & 40\% & 17\% & 38\% & 6\% & 10000 & 10297 & 0.026 & 2 & 4761\tabularnewline
					P25 & 30 & 150 & 43399.6 & 40\% & 29\% & 28\% & 3\% & 154 & 5968 & 0.010 & 2 & 2965\tabularnewline
					P26 & 30 & 150 & 40419.6 & 35\% & 31\% & 30\% & 3\% & 156 & 5875 & 0.010 & 2 & 2965\tabularnewline
					P27 & 30 & 150 & 44419.6 & 41\% & 29\% & 28\% & 2\% & 155 & 5938 & 0.010 & 2 & 2965\tabularnewline
					P28 & 30 & 150 & 48419.6 & 46\% & 26\% & 25\% & 2\% & 163 & 6032 & 0.010 & 2 & 2965\tabularnewline
					P41 & 10 & 90 & 17397.2 & 42\% & 12\% & 41\% & 4\% & 235 & 406 & 0.010 & 1 & 4894\tabularnewline
					P42 & 20 & 80 & 16369.2 & 45\% & 12\% & 39\% & 4\% & 260 & 1047 & 0.010 & 1 & 4630\tabularnewline
					P43 & 30 & 70 & 14972.5 & 49\% & 12\% & 35\% & 5\% & 278 & 1547 & 0.010 & 1 & 4436\tabularnewline
					P44 & 10 & 90 & 20529.8 & 36\% & 13\% & 48\% & 4\% & 218 & 359 & 0.010 & 1 & 5377\tabularnewline
					P45 & 20 & 80 & 18541.9 & 39\% & 13\% & 44\% & 4\% & 353 & 1157 & 0.010 & 1 & 5082\tabularnewline
					P46 & 30 & 70 & 17500.7 & 42\% & 12\% & 42\% & 4\% & 256 & 1500 & 0.010 & 1 & 4792\tabularnewline
					P47 & 10 & 90 & 19178.4 & 38\% & 14\% & 45\% & 4\% & 93 & 265 & 0.010 & 2 & 3005\tabularnewline
					P48 & 20 & 80 & 17820.9 & 41\% & 13\% & 42\% & 4\% & 120 & 812 & 0.010 & 1 & 5133\tabularnewline
					P49 & 30 & 70 & 17445.8 & 42\% & 12\% & 43\% & 4\% & 181 & 1359 & 0.010 & 2 & 4006\tabularnewline
					P50 & 10 & 100 & 21367.9 & 34\% & 11\% & 51\% & 4\% & 224 & 469 & 0.010 & 1 & 5185\tabularnewline
					P51 & 10 & 100 & 20851.0 & 35\% & 12\% & 49\% & 4\% & 248 & 1531 & 0.010 & 1 & 5185\tabularnewline
					P52 & 10 & 100 & 26165.4 & 28\% & 11\% & 58\% & 3\% & 234 & 469 & 0.010 & 1 & 5638\tabularnewline
					P53 & 20 & 100 & 26193.1 & 28\% & 11\% & 58\% & 3\% & 10000 & 23641 & 0.013 & 2 & 2943\tabularnewline
					P54 & 10 & 100 & 22734.9 & 32\% & 13\% & 51\% & 3\% & 96 & 328 & 0.010 & 1 & 5776\tabularnewline
					P55 & 20 & 100 & 18299.5 & 40\% & 16\% & 39\% & 4\% & 122 & 1203 & 0.010 & 1 & 5776\tabularnewline
					\hline 
				\end{tabular}
			}
			\label{Fractional}
		\end{table}
	\end{landscape}
}	
	
	
	\newpage
	\bibliographystyle{apa}
	\bibliography{mybibfile}
	
\end{document}